\numberwithin{equation}{section}
\newtheorem{theorem}{Theorem}[section]
\newtheorem{lemma}[theorem]{Lemma}
\newtheorem{proposition}[theorem]{Proposition}
\newtheorem{definition}[theorem]{Definition}
\newtheorem*{theorem*}{Theorem}
\newtheorem{theorem**}{Maximum Principle}
\theoremstyle{remark}
\newtheorem{remark}[theorem]{\bf{Remark}}
\title{On Phragm\'en-Lindel\"of  principle  for Non-divergence Type Elliptic Equation and  Mixed Boundary conditions}
\author{Akif Ibraguimov\footnote{ Email: {akif.ibraguimov@ttu.edu}. 
{Partially supported by DMS NSF grant 1412796}}~ and
Alexander I. Nazarov
\footnote{
St.Petersburg Department of Steklov Institute, Fontanka 27, St.Petersburg, 191023, Russia, 
and St.Petersburg State University, 
Universitetskii pr. 28, St.Petersburg, 198504, Russia. E-mail: al.il.nazarov@gmail.com.
Supported by RFBR grant 15-01-07650.
}
\date{}
}
\begin{document}

\maketitle
\begin{abstract} 
Paper dedicated to qualitative study of the solution of the Zaremba type problem in Lipschitz domain with respect to the elliptic equation in non-divergent form. Main result is Landis type Growth Lemma in spherical layer for Mixed Boundary Value Problem in the class of "admissible domain". Based on the Growth Lemma Phragm\'en-Lindel\"of theorem is proved at junction point of Dirichlet boundary and boundary over which derivative in non-tangential direction is defined. 
\end{abstract}

\section{Introduction}

We consider  non-divergence elliptic operator
\begin{equation}\label{equ}
{\cal L}u := - \sum_{i,j=1}^{n}a_{i j}(x) D_iD_ju \qquad\mbox{in}\quad \Omega.
\end{equation}
Such operators arise in theory of stochastic processes and various applications.

In (\ref{equ}) $\Omega$ is a domain in $\mathbb{R}^n$, $n\geq 3$, and $D_i$ stands for the differentiation with respect to $x_i$.
We suppose that the boundary $\partial \Omega$ is split   $\partial \Omega=\Gamma_1\cup\{\zeta\}\cup \Gamma_2.$ Here $\Gamma_1$ is support of the Dirichlet condition, 
and $\Gamma_2$ is support of the oblique derivative condition:
\begin{equation*}\label{BC}
u(x)=\Phi(x) \ \ \mbox{on} \ \ \Gamma_1;\quad
\frac{\partial u}{\partial \ell}(x):=\lim_{\delta \to +0} \frac {u(x)-u(x-\delta \ell)}{\delta}=\Psi(x) \ \ \mbox{on} \ \ \Gamma_2,
\end {equation*}
where $\ell=\ell(x)$ is a measurable,  and uniformly non-tangential outward vector field on $\Gamma_2$. Without loss of generality we can suppose $|\ell|\equiv1$. 
We call $\Gamma_1$ Dirichlet boundary, and $\Gamma_2$ Neumann boundary.

At point $\zeta\in\overline{\Gamma_1}\cap\overline{\Gamma_2}$ function $u$ is not defined, and we investigate asymptotic properies of the solution at this point.

For divergence type equation in case of Dirichlet Data this type of theorem first was proved in very general case by Mazya in \cite{MazyaFL1}. Criteria for regularity 
for Zaremba problem first was obtained by Mazya in \cite{MazyaFL}.

Here we consider the case of non-divergence equation in bounded domain $\Omega$   where Neumann  $\Gamma_2$ is Lipschitz in a neighborhood of the point $\zeta$.
 
 In the case $\Gamma_2=\emptyset$ the similar question was discussed by E.M. Landis (see \cite{landis-book,landis-paper}) and sharpened by Yu.A. Alkhutov \cite{Alkhutov}.

We always assume that the matrix of leading coefficients $(a_{ij})$ is bounded, measurable and symmetric, and satisfies the uniform ellipticity 
condition:
\begin{equation*}\label{e1} 
\max_{|\xi|=1}\sup_{x\in \Omega} e(x,\xi)=:e_1<\infty,
\end{equation*}
where  $e$ is the ellipticity function (see \cite{landis-book}, \cite{Alkhutov})
\begin{equation*}\label{ ellipticity function}
e(x,\xi)=  \frac{\sum _{i=1}^{n} a_{ii}(x)}{\sum_{i,j=1}^{n}a_{i j}(x)\xi_i\xi_j}.
\end{equation*}
For simplicity we consider the operators without lower-order terms, a more general case can be easily managed.



The paper is organized as follows. 

In Sec.~\ref{sec:preliminary} we formulate some known results about non-divergence equations:  lemma  on non-tangential derivatives 
at point of maximum (minimum) on the boundary in the form of Nadirashvili \cite{nad-max-principle}, the Landis Growth Lemma in case $\Gamma_2 =\emptyset$, and Growth Lemma in Krylov's form(see \cite{krylovizvest}).

The Growth Lemma for elliptic and parabolic equations first was introduced by Landis in \cite{Landis2, Landis1}. Growth Lemma is a fundamental tool to study 
qualitative properties and regularity of solutions in bounded and unbounded domain. Recent review on Growth Lemma and its applications was published in 
\cite{safonov-1} (see also \cite{Aimar}).


In Sec.~\ref{sec:Lip-boundary} we prove strict Growth Lemma near Neumann boundary.

Sec.~\ref{sec:Spherical-Layer} glues two Growth Lemmas. This result was obtained under 
some admissibility constraint on the boundary $\Gamma_2$, which is an analog of isoperimetric condition.

In the last Sec.~\ref{sec:Dichotomy}, dichotomy theorem is proved for solutions of mixed boundary value problem to non-divergence elliptic equation.  

We use the following notation. $x=(x',x_n)=(x_1,\dots, x_{n-1}, x_n$ is a point in $\mathbb R^n$. $B(x,R)$ is the ball centered in $x$ with radius $R$.


\section{Preliminary Results}\label{sec:preliminary}

Here we recall some known results and prove  auxiliary lemmas for the sub- and supersolution of the equation ${\cal L}u=0$.   
We call function $u$ sub-elliptic (super-elliptic) if $u \in W^2_n(\Omega)\bigcap {\cal C}^{1}(\Omega\cup\Gamma_2)$, and  
 ${\cal L}u\le 0$ (respectively, ${\cal L}u\ge 0$).
 

We say that $\Gamma_2$ satisfies inner cone condition (see, e.g., \cite{nad-max-principle}) 
if there are $0<\varphi<\pi/2$ and $h>0$ such that for any $y\in \Gamma_2$ there exists a 
right cone $K(y)\subset \Omega$ with the apex at $y$, apex angle $\varphi$ and of the height $h$.     

\begin{figure}[ht]
\begin{center}
\advance\leftskip-3cm
\advance\rightskip-3cm
\includegraphics[scale=0.5]{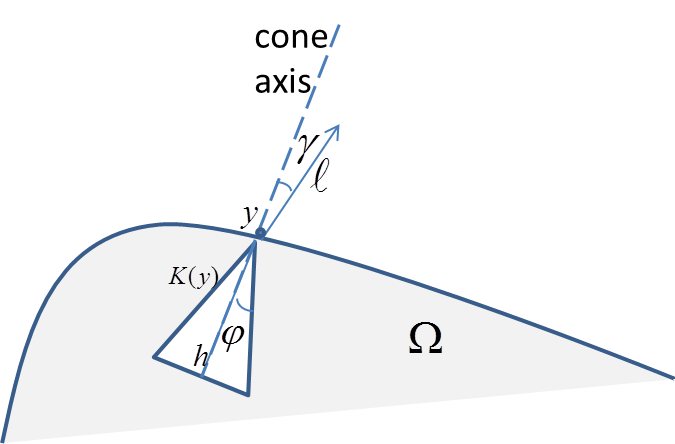}
\caption{Inner cone condition }.
\label{Cone}
\end{center}\end{figure}

In \cite{nad-max-principle} N. Nadirashvili obtained  fundamental generalization of Oleinik-Hopf lemma\footnote{In \cite{nad-max-principle} classical solutions
$u\in {\cal C}^2(\Omega)\cap {\cal C}^1(\overline\Omega)$ are used but due to the Aleksandrov-Bakel'man maximum principle it is transferred to 
$u \in W^2_n(\Omega)\bigcap {\cal C}^{1}(\Omega\cup\Gamma_2)$.}, the so-called ``lemma on non-tangential derivative'':

\begin{lemma}\label{nad} Let $\Gamma_2$ satisfy inner cone condition. Let a non-constant function $u$ be super-elliptic (sub-elliptic) 
${\cal L} u \geq 0 \ ({\cal L}u\leq 0)$ in $\Omega$. Suppose that $y\in\Gamma_2$ and $u(y)\le u(x) \ (u(y)\ge u(x))$ for all $x \in \Gamma_2$. 
Then for any  neighborhood $S$  of $y$ on $\Gamma_2$ and for any $\varepsilon<\varphi$ there exists a point $\widetilde x \in S$ s.t.
\begin{equation*}
\frac{\partial u}{\partial \ell}(\widetilde x)<0 \qquad \Big(\frac{\partial u}{\partial \ell}(\widetilde x)>0\Big) 
\end{equation*}
for any outward direction $\ell$ s.t. the angle $\gamma$ between $\ell$ and the axis of $K(\widetilde x)$ is not greater then $\varphi-\varepsilon$.   
\end{lemma}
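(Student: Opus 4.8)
The plan is to argue by contradiction, combining the classical Oleinik–Hopf boundary point lemma inside the cone with the Aleksandrov–Bakel'man–Pucci (ABP) maximum principle to handle the $W^2_n$ regularity. Suppose, for the super-elliptic case, that the conclusion fails: there is a neighborhood $S$ of $y$ on $\Gamma_2$ and some $\varepsilon<\varphi$ such that for every $\widetilde x\in S$ there is an admissible outward direction $\ell$ (making angle $\le\varphi-\varepsilon$ with the axis of $K(\widetilde x)$) with $\frac{\partial u}{\partial\ell}(\widetilde x)\ge 0$. The first step is to normalize: replacing $u$ by $u-u(y)$ we may assume $u(y)=0\le u(x)$ on $\Gamma_2$, and since $u$ is non-constant and ${\cal L}u\ge0$, by the strong maximum principle $u>0$ in the interior (if $u$ attained an interior minimum $0$ it would be constant). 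In particular $u>0$ along the open axis of each cone $K(\widetilde x)$ emanating from $\widetilde x\in\Gamma_2$.

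The core step is a Hopf-type construction. Fix $\widetilde x\in S$ and let $K=K(\widetilde x)\subset\Omega$ be the inner cone with apex $\widetilde x$, apex angle $\varphi$, height $h$. Inside $K$ consider a ball $B\subset K$ tangent to the cone's lateral surface near the apex, or more precisely work in a slightly truncated sub-cone and build a barrier of the standard exponential form $w(x)=e^{-\lambda|x-x_0|^2}-e^{-\lambda r_0^2}$ adapted to a ball $B(x_0,r_0)$ whose closure meets $\partial K$ only at $\widetilde x$ and lies in $\Omega$ otherwise. Choosing $\lambda$ large depending only on the ellipticity constant $e_1$ and $n$, one gets ${\cal L}w\le 0$ in the annular region $B(x_0,r_0)\setminus B(x_0,r_0/2)$; here the ellipticity is used exactly in the form of the function $e(x,\xi)$ and the bound $e_1$, since $\sum a_{ij}\partial_i\partial_j w$ is controlled by $\sum a_{ij}(x-x_0)_i(x-x_0)_j$ minus $\sum a_{ii}$, and $e_1<\infty$ lets us absorb the trace term. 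On the inner sphere $u\ge c>0$ by the interior positivity just established (and a compactness/continuity argument uniform over $\widetilde x$ in a smaller neighborhood), so $u\ge \sigma w$ there for small $\sigma>0$, while on the outer sphere $w=0\le u$; applying the ABP maximum principle to $u-\sigma w$ (which is in $W^2_n$ and super-elliptic) on the annulus gives $u\ge\sigma w$ throughout, with equality at the touching point $\widetilde x$. Differentiating in the outward direction $\ell$ at $\widetilde x$ yields $\frac{\partial u}{\partial\ell}(\widetilde x)\le\sigma\,\frac{\partial w}{\partial\ell}(\widetilde x)<0$, because $\frac{\partial w}{\partial\ell}(\widetilde x)<0$ strictly for any $\ell$ transversal to $\partial B(x_0,r_0)$ at $\widetilde x$ — and the geometry of the cone guarantees that every $\ell$ within angle $\varphi-\varepsilon$ of the axis is such a transversal direction, with the transversality angle bounded below uniformly. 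This contradicts $\frac{\partial u}{\partial\ell}(\widetilde x)\ge 0$ and proves the lemma; the sub-elliptic case follows by replacing $u$ with $-u$.

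The main obstacle I anticipate is making the positivity lower bound $u\ge c>0$ on the inner sphere \emph{uniform} as $\widetilde x$ ranges over a neighborhood of $y$, since a priori the barrier and the constant $c$ depend on $\widetilde x$ through the placement of the internal ball $B(x_0,r_0)$. This is resolved by exploiting that the inner cone condition is uniform ($\varphi$ and $h$ are fixed), so the barrier can be built with parameters $r_0,\lambda$ depending only on $\varphi,h,n,e_1$; the only genuinely $\widetilde x$-dependent input is a lower bound for $u$ at the center point $x_0(\widetilde x)$ sitting on the cone axis at a fixed distance from the apex, and this is handled by a Harnack inequality for super-elliptic $W^2_n$ functions (Krylov–Safonov) on a fixed-size interior ball, together with $u(y)=0$ and continuity, shrinking $S$ if necessary. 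A secondary technical point — harmless but worth stating — is that the touching point argument requires $u\in{\cal C}^1$ up to $\Gamma_2$ near $\widetilde x$, which is part of the definition of super-elliptic, so $\frac{\partial u}{\partial\ell}(\widetilde x)$ is a genuine one-sided limit and the differential inequality at the touching point is legitimate.
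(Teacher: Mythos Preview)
The paper does not prove this lemma at all: it is quoted from Nadirashvili \cite{nad-max-principle}, with only a footnote remarking that the passage from ${\cal C}^2$ to $W^2_n$ solutions goes via the Aleksandrov--Bakel'man maximum principle. So there is nothing to compare your argument to in the paper itself; the question is simply whether your sketch is a valid proof.

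It is not, and the gap is exactly the reason Nadirashvili's result is non-trivial. Your core step places a ball $B(x_0,r_0)\subset K(\widetilde x)$ whose closure meets $\partial K$ only at the apex $\widetilde x$. No such ball exists: if a ball of radius $r$ lies inside a cone of half-angle $\varphi<\pi/2$ with center at distance $d$ from the apex along the axis, then $r\le d\sin\varphi<d$, so it cannot touch the apex. The inner cone condition is strictly weaker than the interior ball condition at the apex, and the classical Hopf barrier you wrote down needs the latter. This is precisely why the conclusion of the lemma is only ``there exists $\widetilde x\in S$'' rather than ``for $\widetilde x=y$'': one generally cannot force the derivative sign at the minimum point itself, only somewhere nearby.

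A second, related problem: your contradiction hypothesis is ``for every $\widetilde x\in S$ some admissible $\ell$ has $\partial u/\partial\ell\ge 0$'', but your construction never uses that hypothesis --- you fix a single $\widetilde x$ and try to show the derivative is negative there in all admissible directions. If $\widetilde x\ne y$ you have no reason for $u(\widetilde x)=0$, so ``equality at the touching point'' fails; if $\widetilde x=y$ you are back to the geometric obstruction above. Nadirashvili's actual argument has to produce $\widetilde x$ by an indirect mechanism (working inside the cone $K(y)$ and analyzing where on its lateral boundary the barrier comparison becomes tight), and that mechanism is missing from your sketch.
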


From standard maximum principle and Lemma \ref{nad} follows comparison theorem for mixed boundary value problem.

\begin{lemma}\label{comparison_theorem}
Let $\Omega$ be a bounded domain, $\partial \Omega =\Gamma_1\cup \Gamma_2$. Let $\Gamma_2$ satisfy inner cone condition. 
Suppose that vector field $\ell$ satisfies the same condition as in Lemma \ref{nad}.
Let functions $u$ and $v$ belong to $W^2_n(\Omega)\bigcap {\cal C}^{1}(\Omega\cup\Gamma_2)\cap {\cal C}(\overline \Omega)$.

Then, if ${\cal L} u \leq {\cal L} v  $ in $\Omega$, $u\leq v $ on $\Gamma_1$, and
$\frac{\partial u}{\partial \ell}\leq \frac{\partial v}{\partial \ell}$ on $\Gamma_2$ then $u\geq v $ in $\overline \Omega$.
\end{lemma}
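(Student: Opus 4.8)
The plan is to reduce the comparison to the one-sided maximum principle for ${\cal L}$ and to Lemma~\ref{nad}, both applied to the single auxiliary function $w:=u-v\in W^2_n(\Omega)\cap{\cal C}^1(\Omega\cup\Gamma_2)\cap{\cal C}(\overline{\Omega})$. First I would record what the hypotheses and the linearity of ${\cal L}$ give: $w$ is super-elliptic in $\Omega$ (${\cal L}w\ge 0$); $w\ge 0$ on $\Gamma_1$; and $\frac{\partial w}{\partial\ell}\ge 0$ on $\Gamma_2$. The assertion to be proved is then exactly that $w\ge 0$ on $\overline{\Omega}$, i.e.\ $u\ge v$.

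Next I would argue by contradiction. Suppose $m:=\min_{\overline{\Omega}}w<0$; since $w\ge 0>m$ on $\Gamma_1$, the function $w$ is non-constant. By the minimum principle for super-elliptic functions in the class $W^2_n$ — the ``standard maximum principle'' quoted just above the statement, available through the Aleksandrov--Bakel'man estimate / Bony's lemma — the value $m$ is attained on $\partial\Omega=\Gamma_1\cup\Gamma_2$; since $w\ge 0>m$ on $\Gamma_1$, it is attained at some point $y\in\Gamma_2$. Thus $w(y)=m$, and in particular $w(y)\le w(x)$ for every $x\in\Gamma_2$.

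Now I would invoke Lemma~\ref{nad} in its super-elliptic form at the boundary minimum point $y\in\Gamma_2$: for every neighbourhood $S$ of $y$ on $\Gamma_2$ and every $\varepsilon<\varphi$ there is a point $\widetilde x\in S$ such that $\frac{\partial w}{\partial\ell'}(\widetilde x)<0$ for every outward direction $\ell'$ whose angle with the axis of $K(\widetilde x)$ is not greater than $\varphi-\varepsilon$. The assumption that $\ell$ ``satisfies the same condition as in Lemma~\ref{nad}'' means precisely that $\ell$ is uniformly non-tangential on $\Gamma_2$, i.e.\ there is a fixed $\varepsilon_0\in(0,\varphi)$ such that for every $x\in\Gamma_2$ the angle between $\ell(x)$ and the axis of $K(x)$ is at most $\varphi-\varepsilon_0$. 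Taking $\varepsilon=\varepsilon_0$ above, the prescribed direction $\ell(\widetilde x)$ is admissible at $\widetilde x$, so $\frac{\partial w}{\partial\ell}(\widetilde x)<0$, contradicting $\frac{\partial w}{\partial\ell}\ge 0$ on $\Gamma_2$. Hence $m\ge 0$, that is, $u\ge v$ on $\overline{\Omega}$.

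I expect the only genuinely delicate point to be the matching performed in the last paragraph: Lemma~\ref{nad} yields the strict sign of the directional derivative only at some nearby point $\widetilde x$ (in general different from $y$) and only for directions lying inside a cone of half-opening $\varphi-\varepsilon$ about the local cone axis, whereas the hypothesis controls $\frac{\partial w}{\partial\ell}$ for the prescribed, merely measurable field $\ell$; bridging these is exactly where the uniform non-tangentiality of $\ell$ and the inner cone condition on $\Gamma_2$ have to be used quantitatively. A secondary care is to apply the (strong) minimum principle in the $W^2_n$ regularity class rather than for classical solutions; everything else is the routine ``boundary-point lemma'' packaging of the maximum principle.
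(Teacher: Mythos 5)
Your strategy --- pass to the difference of the two functions, use the strong maximum principle to push its extremum to the boundary, rule out $\Gamma_1$ by the boundary data, and contradict the oblique-derivative condition via Lemma~\ref{nad} at an extremum point on $\Gamma_2$ --- is exactly the argument the paper has in mind (it offers nothing beyond the one-line remark that the lemma ``follows from the standard maximum principle and Lemma~\ref{nad}''). The delicate point you single out, matching the prescribed field $\ell$ against the cone of admissible directions in Lemma~\ref{nad} via uniform non-tangentiality, is also the right one, and your treatment of the non-constancy of $w$ and of the global minimality needed to invoke Lemma~\ref{nad} is fine.

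However, your first paragraph mis-states what the hypotheses give. Since ${\cal L}$ is linear and ${\cal L}u\le{\cal L}v$, the difference $w=u-v$ satisfies ${\cal L}w\le 0$, i.e.\ $w$ is \emph{sub}-elliptic, not super-elliptic; likewise $u\le v$ on $\Gamma_1$ gives $w\le 0$ there, and $\frac{\partial w}{\partial\ell}\le 0$ on $\Gamma_2$. Running your argument with the correct signs (maximum principle at a boundary \emph{maximum} of the sub-elliptic $w$, plus the sub-elliptic half of Lemma~\ref{nad}, which produces a point with $\frac{\partial w}{\partial\ell}(\widetilde x)>0$) yields $w\le 0$, i.e.\ $u\le v$ --- not $u\ge v$. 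The printed conclusion ``$u\geq v$'' is evidently a typo in the statement: in the proof of Lemma~\ref{krylov} the comparison lemma is invoked with ${\cal L}v\ge{\cal L}u$, $v\ge u$ on $\Gamma_1$, $\frac{\partial v}{\partial\ell}\ge\frac{\partial u}{\partial\ell}$ on $\Gamma_2$, and the conclusion drawn is $v\ge u$, so the inequality in the conclusion must be oriented the same way as in the hypotheses. Your three sign flips in the reduction exactly compensate for that typo, but as written the deductions ``${\cal L}w\ge0$, $w\ge0$ on $\Gamma_1$, $\frac{\partial w}{\partial\ell}\ge0$ on $\Gamma_2$'' do not follow from the stated hypotheses. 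You should either flag the typo and prove $u\le v$, or say explicitly that you are proving the correctly oriented mirror statement.
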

%
%
%

\begin{definition}\label{strict_growth}
Let $\Omega$ be a domain, $\partial \Omega=\Gamma_1\cup \Gamma_2$. Define ``small ball'' $B(0,R)$ and ``big ball'' $B(0,aR)$, $a>1$ (see Fig. \ref{domainandballs}).

We call the function $w$ {\bf barrier} with respect to mixed boundary value problem in these two balls if it posses properties:
\begin{equation}\label{Lw<0}
 w \ \text{is sub-elliptic } \  ({\cal L}w\leq 0) \ \text{in the intersection} \ \Omega\cap B(0,aR);
\end{equation}
\begin{equation}\label{w<1}
w(x)\leq 1 \ \text{on} \ \Gamma_1\cap B(0,aR);
\end{equation} 
\begin{equation}\label{w_l<0}
\frac{\partial w}{\partial \ell}\leq 0 \ \text{on} \ \Gamma_2\cap B(0,aR);
\end{equation} 
\begin{equation}\label{w<0}
w\leq 0 \ \text{on} \ \overline \Omega\cap \partial B(0,aR);
\end{equation}
\begin{equation}\label{w>eta}
w(x)\ge \eta_0 \ \text{in the intesection} \
 B(0,R)\cap \Omega
\end{equation} 
for some constant $\eta_0$.  

\end{definition}

\begin{figure}[ht]
\begin{center}
\advance\leftskip-3cm
\advance\rightskip-3cm
\includegraphics[scale=0.5]{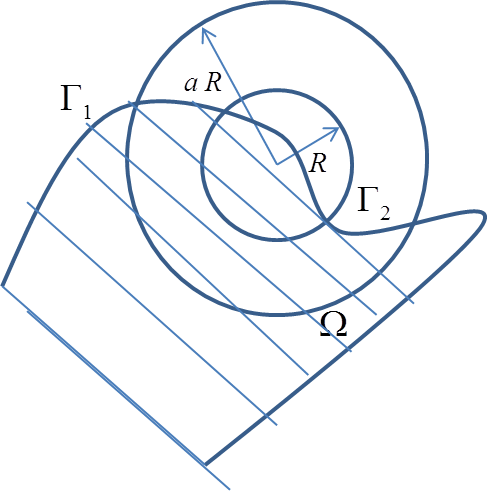}
\caption{Domain $G$ and two balls $B(0,R)$ and $B(0,aR),\ (a>1) \ $}
\label{domainandballs}
\end{center}\end{figure}

Now we are in the position to prove the following strict growth property for subsolutions of the mixed boundary value problem.

\begin{lemma}\label{krylov}
Let $\Omega$ be a domain, $\partial \Omega=\Gamma_1\cup \Gamma_2$. Suppose that a function $u$ be sub-elliptic in $\Omega\cap B(0,aR)$, $u>0$ in $\Omega$, 
$u=0$ on $\Gamma_1\cap B(0,aR)$ and $\frac{\partial u}{\partial \ell}\leq 0$ on $\Gamma_2\cap B(0,aR)$. Let $\Gamma_2$ satisfy inner cone condition.
 
Assume that there is a barrier $w$ in balls $B(0,R)$ and $B(0,aR)$. 
 
 Then
  \begin{equation}\label{boundlayer_strong}
\sup_{\Omega\cap B(0,a R)} u\ge \frac{\sup_{\Omega\cap B(0,R)} u}{1-\eta_0}.
\end{equation}  
\end{lemma}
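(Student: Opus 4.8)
The plan is to compare $u$ with a suitable multiple of the barrier $w$ on the intersection $\Omega\cap B(0,aR)$ and invoke the comparison principle (Lemma~\ref{comparison_theorem}), exactly in the spirit of Krylov's growth estimate. Set $M:=\sup_{\Omega\cap B(0,aR)}u$; we may assume $M<\infty$, otherwise \eqref{boundlayer_strong} is trivial. The idea is that $w$, being subelliptic, bounded by $1$ on $\Gamma_1$, with nonpositive oblique derivative on $\Gamma_2$ and nonpositive on the outer sphere, should be dominated by $u/M$ wherever $u/M$ satisfies the reverse inequalities on the boundary pieces — and on $B(0,R)\cap\Omega$ the lower bound \eqref{w>eta} then forces $u\ge \eta_0 M$ there, i.e.\ $M\ge \eta_0^{-1}\sup_{B(0,R)\cap\Omega}u$. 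A short algebraic rearrangement of $M\ge\sup_{B(0,R)\cap\Omega}u+\eta_0 M$ (see below) yields the stated bound $M\ge (1-\eta_0)^{-1}\sup_{B(0,R)\cap\Omega}u$.

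The key steps, in order. First, normalize: consider $v:=M w$. Then $v$ is subelliptic in $\Omega\cap B(0,aR)$ by \eqref{Lw<0} (and linearity), $v\le M$ on $\Gamma_1\cap B(0,aR)$ by \eqref{w<1}, $\frac{\partial v}{\partial\ell}\le 0$ on $\Gamma_2\cap B(0,aR)$ by \eqref{w_l<0}, and $v\le 0$ on $\overline\Omega\cap\partial B(0,aR)$ by \eqref{w<0}. Second, observe that $u$ itself satisfies the matching inequalities from the opposite side on the boundary of the region $\Omega\cap B(0,aR)$: $\mathcal Lu\le 0$ there (hypothesis), $u=0\ge$ nothing problematic on $\Gamma_1\cap B(0,aR)$, $\frac{\partial u}{\partial\ell}\le 0$ on $\Gamma_2\cap B(0,aR)$, and $u\le M$ everywhere, in particular on $\overline\Omega\cap\partial B(0,aR)$. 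Third — this is the actual comparison — apply Lemma~\ref{comparison_theorem} on the domain $\Omega\cap B(0,aR)$ with its boundary split into the Dirichlet part $\big(\Gamma_1\cap B(0,aR)\big)\cup\big(\overline\Omega\cap\partial B(0,aR)\big)$ and the Neumann part $\Gamma_2\cap B(0,aR)$, to the pair of functions: the subsolution candidate $v=Mw$ and the supersolution candidate $u$. On the Dirichlet part, $v\le M$ while $u\in\{0\}\cup[\,\cdot\,]$; more carefully, on $\Gamma_1\cap B(0,aR)$ we have $v\le M$ but $u=0$, so we instead compare $v$ with $u$ via the shifted function — here one should apply the comparison lemma to $u$ and $v$ directly, using $v\le M$ on $\Gamma_1$, $v\le 0\le u$ on the spherical cap, and $\partial v/\partial\ell\le 0\le$ is not automatic, so one actually compares $Mw$ with $u+M\cdot(\text{something})$. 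The clean route: apply Lemma~\ref{comparison_theorem} to conclude $u\ge Mw$ in $\overline{\Omega\cap B(0,aR)}$ provided $\mathcal L(Mw)\le\mathcal Lu$, $Mw\le u$ on the Dirichlet part, $\partial(Mw)/\partial\ell\le\partial u/\partial\ell$ on the Neumann part. The Neumann and subsolution conditions hold; on the spherical cap $Mw\le 0<u$; the only delicate point is on $\Gamma_1\cap B(0,aR)$, where $u=0$ but $Mw$ could be positive — so one must instead subtract: set $\widetilde v:= M(w-1)$, which is $\le 0=u$ on $\Gamma_1$, still subelliptic, still has $\partial\widetilde v/\partial\ell\le 0$, but on the spherical cap $\widetilde v=M(w-1)\le -M$, which is fine since $u\ge 0$. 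Hence $u\ge M(w-1)$ on $\Omega\cap B(0,aR)$, so on $B(0,R)\cap\Omega$ we get $u\ge M(\eta_0-1)$, which is \emph{negative} and useless.

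This sign issue shows the comparison must be run the other way, against $M-u$ rather than $u$. So the correct fourth step is: let $z:=M-u$. Then $\mathcal Lz=-\mathcal Lu\ge 0$ in $\Omega\cap B(0,aR)$, i.e.\ $z$ is super-elliptic; $z=M$ on $\Gamma_1\cap B(0,aR)$; $\partial z/\partial\ell=-\partial u/\partial\ell\ge 0$ on $\Gamma_2\cap B(0,aR)$; and $z\ge 0$, in particular $z\ge 0$ on $\overline\Omega\cap\partial B(0,aR)$. Now compare $z$ with $Mw$: on $\Gamma_1\cap B(0,aR)$, $Mw\le M=z$; on $\overline\Omega\cap\partial B(0,aR)$, $Mw\le 0\le z$; on $\Gamma_2\cap B(0,aR)$, $\partial(Mw)/\partial\ell\le 0\le\partial z/\partial\ell$; and $\mathcal L(Mw)\le 0\le\mathcal Lz$. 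Lemma~\ref{comparison_theorem}, applied on $\Omega\cap B(0,aR)$ with the Dirichlet part $\big(\Gamma_1\cap B(0,aR)\big)\cup\big(\overline\Omega\cap\partial B(0,aR)\big)$ and Neumann part $\Gamma_2\cap B(0,aR)$, then yields $z\ge Mw$ on $\overline{\Omega\cap B(0,aR)}$. On $B(0,R)\cap\Omega$, \eqref{w>eta} gives $Mw\ge\eta_0 M$, hence $M-u\ge\eta_0 M$, i.e.\ $u\le M(1-\eta_0)$ on $B(0,R)\cap\Omega$. Taking the supremum over $B(0,R)\cap\Omega$ gives $\sup_{B(0,R)\cap\Omega}u\le(1-\eta_0)\,M=(1-\eta_0)\sup_{\Omega\cap B(0,aR)}u$, which rearranges to \eqref{boundlayer_strong}.

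I expect the main obstacle to be purely bookkeeping: making sure Lemma~\ref{comparison_theorem} genuinely applies on the cut domain $\Omega\cap B(0,aR)$, whose boundary has three pieces ($\Gamma_1$-part, $\Gamma_2$-part, and the spherical cap $\overline\Omega\cap\partial B(0,aR)$) rather than two, and that the relevant inner-cone and non-tangentiality hypotheses survive the restriction to $B(0,aR)$; plus checking the regularity $z,\,Mw\in W^2_n\cap\mathcal C^1(\Omega\cup\Gamma_2)\cap\mathcal C(\overline\Omega)$ on that subdomain. One also needs $\sup_{\Omega\cap B(0,aR)}u<\infty$, which follows from $u\in W^2_n$ and the Aleksandrov--Bakel'man estimate, or may simply be assumed. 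The analytic content is entirely contained in the existence of the barrier $w$ and in the comparison principle; the proof itself is then the two-line normalization and sign-tracking above.
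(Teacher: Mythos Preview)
Your final argument is correct and is essentially the paper's proof: setting $z=M-u$ and showing $z\ge Mw$ is precisely the inequality $M(1-w)\ge u$, which the paper obtains by applying Lemma~\ref{comparison_theorem} directly to the pair $v:=M(1-w)$ and $u$. The only difference is cosmetic --- the paper goes straight to the comparison function $v=M(1-w)$ without the exploratory detours.
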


\begin{proof}
Let $M=\sup_{\Omega\cap B(0,aR)} u$, and let the barrier $w(x)$ be as in Definition \ref{strict_growth}. Define
\begin{equation*}
v(x)=M(1-w(x)).
\end{equation*} 
Obviously ${\cal L} v \geq {\cal L} u$ in $\Omega$,  $v\geq u$ on $\Gamma_1\cap B(0,aR)$,  $ \frac{\partial v}{\partial \ell}\ge \frac{\partial u}{\partial \ell}$ on 
$\Gamma_2$, and $v\geq M \geq u$ on $\partial B(0,aR)\cap \Omega.$ Applying comparison Lemma \ref{comparison_theorem} to functions $v$ and $u$ in the domain 
$\Omega\cap B(0,aR)$ we get that $v\geq u$. In the intersection $\Omega\cap B(0,R)$ this gives with regard of (\ref{w>eta})
\begin{equation*}
M(1-\eta_0)\geq M(1-\inf_{\Omega\cap B(0,R)} w)\geq  \sup_{\Omega\cap B(0,R)} u. 
\end{equation*}
The latter is equivalent to statement in \eqref{boundlayer_strong}.
\end{proof}

We  recall the well-known notion of $s$-capacity, see, e.g., \cite[Sec. I.2]{landis-book}. 
\begin{definition}\label{s-capacity}
Let $H$ be a Borel set. Let a measure $\mu$ be defined on Borel subsets of $H$. We call $\mu$ {\bf admissible} and write $\mu \in {\cal M}(H)$ if 
\begin{equation*}
\int_{H} \frac{d\mu(y)}{|x-y|^s} \leq 1, \quad  \text{for} \quad x\in R^n\setminus H. 
\end{equation*}  
Then the quantity
\begin{equation*}
{\bf C}_s(H)=\sup_{\mu\in{\cal M}(H)} \mu(H) 
\end{equation*}
is called {\bf $s$-capacity} of $H$.
\end{definition}

We also recall the following simple statement.
 \begin{proposition}\label{subsol}
If $s\ge e_1-2 $ then $L|x|^{-s}\le 0$. 
\end{proposition}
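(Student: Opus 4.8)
The plan is to compute the action of the operator $\mathcal{L}$ on the radial function $\rho(x) := |x|^{-s}$ directly, using the uniform ellipticity condition phrased via the ellipticity function $e(x,\xi)$. First I would record the derivatives: for $\rho = r^{-s}$ with $r = |x|$ one has $D_i r = x_i/r$, hence
\begin{equation*}
D_i\rho = -s\, r^{-s-2} x_i, \qquad
D_iD_j\rho = -s\, r^{-s-2}\delta_{ij} + s(s+2)\, r^{-s-4} x_i x_j.
\end{equation*}
Contracting against the coefficient matrix $(a_{ij})$ and using symmetry, this gives
\begin{equation*}
\sum_{i,j} a_{ij} D_iD_j\rho
= -s\, r^{-s-2}\sum_i a_{ii} + s(s+2)\, r^{-s-4}\sum_{i,j} a_{ij} x_i x_j.
\end{equation*}

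Next I would factor out $s(s+2)r^{-s-4}\sum_{i,j}a_{ij}x_ix_j$, which is strictly positive by ellipticity (assuming $s>0$; the borderline $s=e_1-2$ is $\ge 1>0$ since $e_1\ge n\ge 3$), to obtain
\begin{equation*}
\sum_{i,j} a_{ij} D_iD_j\rho
= s\, r^{-s-4}\Big(\sum_{i,j} a_{ij} x_i x_j\Big)
\left( s + 2 - \frac{r^2\sum_i a_{ii}}{\sum_{i,j} a_{ij} x_i x_j}\right).
\end{equation*}
Now observe that with $\xi = x/r$ the last fraction is exactly the ellipticity function $e(x,\xi)$ evaluated at the radial direction, so the bracket equals $s + 2 - e(x, x/|x|) \ge s + 2 - e_1$. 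Since $\mathcal{L}\rho = -\sum_{i,j}a_{ij}D_iD_j\rho$, and the prefactor $s\, r^{-s-4}\sum_{i,j}a_{ij}x_ix_j$ is positive, the hypothesis $s \ge e_1 - 2$ forces the bracket to be nonnegative, hence $\mathcal{L}\rho \le 0$, as claimed.

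There is essentially no serious obstacle here; the only care needed is (i) to note that the computation is valid away from the origin, where $\rho$ is smooth, which suffices for the intended use of $\rho$ as a comparison/barrier function on annular regions not containing $0$; and (ii) to check the sign of the prefactor $s(s+2)$, which is positive in the relevant range $s \ge e_1 - 2 \ge 1$. One should also record that the bound is direction-wise: we only needed $e(x,\xi) \le e_1$ for the single direction $\xi = x/|x|$, so the conclusion is robust. This is precisely the role the $s$-capacity and Proposition \ref{subsol} will play later — supplying an explicit subsolution whose superlevel sets are balls — so it is worth stating the identity for $\mathcal{L}\rho$ cleanly rather than just the inequality.
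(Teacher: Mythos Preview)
Your computation is correct and is exactly the standard argument. Note, however, that the paper does not actually give a proof of this proposition: it is stated as a ``simple statement'' recalled from the literature (it is the classical computation in Landis's book \cite{landis-book}), so there is no proof in the paper to compare against. Your derivation is precisely the one that underlies such references; the only minor slip is that the prefactor whose sign you need is $s\,r^{-s-4}\sum_{i,j}a_{ij}x_ix_j$, not $s(s+2)$, but since $s\ge e_1-2\ge n-2\ge 1>0$ (as you correctly observe) this is harmless.
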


Now we formulate a variant of the Landis Growth Lemma, see \cite[Sec. I.4]{landis-book}.

\begin{lemma}\label{landis}
 Let function $u$ be sub-elliptic in $\Omega\cap B(0,aR)$, $u>0$ in $\Omega$, $u =0$ on $\Gamma_1=\partial \Omega \cap B(0,aR)$. 
 Let $s\ge e_1-2$. Then there exists $0<\eta_1<1$ depending only on $s$ s.t.
 \begin{equation*}\label{boundlayer_capacity}
\sup_{\Omega\cap B(0,a R)} u\ge \frac{\sup_{\Omega\cap B(0,R)} u}{1-\eta_1 {\bf C}_s(H)R^{-s}}.
\end{equation*} 
Here $H=\Gamma_1\cap B(0,R)$.

Consequently if $B(0,R)\setminus \Omega$ contains a ball with radius $\delta R$ then  
\begin{equation*}\label{boundlayer_constant}
\sup_{\Omega\cap B(0,a R)} u\ge \frac{\sup_{\Omega\cap B(0,R)} u}{1-\widetilde\eta_1},
\end{equation*} 
where the constant $\widetilde\eta_1$ depend on $s$ and $\delta$.
\end{lemma}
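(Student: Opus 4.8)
The plan is to prove the Landis Growth Lemma (Lemma \ref{landis}) by exhibiting a barrier in the sense of Definition \ref{strict_growth} and then invoking Lemma \ref{krylov} (more precisely, its purely Dirichlet version, i.e.\ the case $\Gamma_2=\emptyset$). First I would fix the set $H=\Gamma_1\cap B(0,R)$ and an admissible measure $\mu\in{\cal M}(H)$ that nearly realizes the $s$-capacity, so that $\mu(H)\ge {\bf C}_s(H)-\epsilon$. The candidate potential is
\begin{equation*}
W(x)=\int_{H}\frac{d\mu(y)}{|x-y|^{s}}.
\end{equation*}
By Proposition \ref{subsol}, since $s\ge e_1-2$, each kernel $|x-y|^{-s}$ satisfies ${\cal L}|x-y|^{-s}\le 0$ away from $y$; integrating in $y$ over $H$ preserves this, so ${\cal L}W\le 0$ in $\Omega\cap B(0,aR)$ (the set $H$ lies on $\partial\Omega$, hence is disjoint from the open set where we need sub-ellipticity). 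Admissibility of $\mu$ gives the upper bound $W(x)\le 1$ for $x\notin H$, in particular on $\Gamma_1$; and on $\partial B(0,aR)$ one has $|x-y|\ge(a-1)R$ for $y\in H\subset B(0,R)$, so $W\le \mu(H)\,((a-1)R)^{-s}\le {\bf C}_s(H)\,((a-1)R)^{-s}$ there. Finally, on $B(0,R)$ (inside the small ball) one has $|x-y|\le 2R$, so $W(x)\ge \mu(H)(2R)^{-s}$.

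From $W$ I would build the barrier $w$ by an affine renormalization: set
\begin{equation*}
w(x)=\frac{W(x)-\kappa}{1-\kappa},\qquad \kappa:={\bf C}_s(H)\,((a-1)R)^{-s},
\end{equation*}
so that $w\le 1$ on $\Gamma_1\cap B(0,aR)$, $w\le 0$ on $\overline\Omega\cap\partial B(0,aR)$, ${\cal L}w\le 0$ (sub-ellipticity is preserved by an increasing affine map with positive slope, using $\kappa<1$, which holds for $a$ fixed and $R$ not too small — or one simply absorbs constants), and condition \eqref{w_l<0} is vacuous since $\Gamma_2=\emptyset$ in Lemma \ref{landis}. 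The lower bound \eqref{w>eta} then reads
\begin{equation*}
w(x)\ge \frac{\mu(H)(2R)^{-s}-\kappa}{1-\kappa}\ge \eta_0,
\end{equation*}
and a short computation shows $\eta_0$ is bounded below by $c(s,a)\,{\bf C}_s(H)R^{-s}$ up to the $\epsilon$-loss and the lower-order term $\kappa$, which is itself $O({\bf C}_s(H)R^{-s})$; choosing $a$ a fixed absolute constant (say $a=2$) makes the dependence clean. Applying Lemma \ref{krylov} with this barrier yields $\sup_{\Omega\cap B(0,aR)}u\ge \sup_{\Omega\cap B(0,R)}u/(1-\eta_0)\ge \sup_{\Omega\cap B(0,R)}u/(1-\eta_1{\bf C}_s(H)R^{-s})$ after renaming the constant, which is the first assertion.

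For the second, consequent, assertion I would use the standard estimate ${\bf C}_s(K)\ge c_{n,s}\,\rho^{\,n-s}$ for a ball $K$ of radius $\rho$ (test with normalized Lebesgue measure on $K$, or with the uniform measure on its boundary sphere, and check admissibility): if $B(0,R)\setminus\Omega$ contains a ball of radius $\delta R$, and noting $B(0,R)\setminus\Omega\supset H$ is not literally true — rather one argues the relevant complement set has a ball, which for the Dirichlet problem is exactly the obstacle set — one gets ${\bf C}_s(H)R^{-s}\ge c_{n,s}\delta^{\,n-s}$, a positive constant $\widetilde\eta_1$ depending only on $s$ (and $n$) and $\delta$. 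The main obstacle I anticipate is bookkeeping the constants carefully: making sure the affine shift $\kappa$ really is of lower order than the main gain $\mu(H)(2R)^{-s}$ so that $\eta_0>0$ survives, and handling the case ${\bf C}_s(H)R^{-s}$ close to (or exceeding) $1$, where the stated fraction is interpreted as $+\infty$ and the inequality is trivial; also one must confirm that integrating the differential inequality ${\cal L}|x-y|^{-s}\le 0$ against $\mu$ is legitimate, i.e.\ that $W\in W^2_n$ locally in $\Omega\cap B(0,aR)$, which follows since $H$ is at positive distance from any compact subset of that open set.
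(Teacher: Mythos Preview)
The paper does not prove Lemma~\ref{landis}; it is quoted as a known result from Landis \cite[Sec.~I.4]{landis-book}. Your proposal is exactly the classical Landis construction found there: take a near-optimal admissible measure $\mu$ on $H$, form its Riesz $s$-potential $W$, shift and rescale affinely to obtain a barrier in the sense of Definition~\ref{strict_growth} (with $\Gamma_2=\emptyset$), and invoke Lemma~\ref{krylov}. So your route coincides with the one the paper defers to.

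Two small corrections are in order. First, the $s$-capacity of Definition~\ref{s-capacity} is homogeneous of degree $s$ under dilations, so for a ball of radius $\rho$ one has $\mathbf{C}_s(B(\cdot,\rho))=c_{n,s}\,\rho^{s}$, not $\rho^{n-s}$; with the correct exponent the consequent becomes $\mathbf{C}_s(H)R^{-s}\ge c_{n,s}\delta^{s}$, a constant depending only on $s,n,\delta$, which is the claimed $\widetilde\eta_1$. Second, your suggested choice $a=2$ makes $\kappa=\mathbf{C}_s(H)R^{-s}$ exceed the gain $\mu(H)(2R)^{-s}\approx 2^{-s}\mathbf{C}_s(H)R^{-s}$, so $\eta_0$ comes out negative; the subtraction leaves a positive remainder only when $a-1>2$, e.g.\ $a=4$ (Landis's standard choice). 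Your caveat that for the consequent $H$ must be read as the full complement $B(0,R)\setminus\Omega$ (via zero extension of $u$) rather than literally $\partial\Omega\cap B(0,R)$ is correct and is how the argument in \cite{landis-book} proceeds.
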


\section{Growth Lemma near Neumann boundary}\label{sec:Lip-boundary}

%

Here we prove the Growth Lemmas in the domain adjunct to $\Gamma_2$ under some assumption on $\Gamma_1$. 

We recall that $\Gamma_2$ is uniformly Lipschitz in a neighborhood of $x^0$. This means that there is $\delta>0$ s.t. the set $\Gamma_2\cap B(x^0,\delta)$ 
is the graph $x_n=f(x')$ in a local Cartesian coordinate system, and the function $f$ is Lipschitz. Moreover, we suppose that its Lipschitz constant does not 
exceed $L$. Without loss of generality we assume that $\Omega\cap B(x^0,\delta)\subset\{x_n<f(x')\}$ (see Fig. \ref{Lipschitz}). This implies the inner cone condition
if we direct the axis of the cone $K$ along $-x_n$ and set $\varphi=\cot^{-1}(L)$.
\begin{figure}[ht]
\begin{center}
\advance\leftskip-3cm
\advance\rightskip-3cm
\includegraphics[scale=0.5]{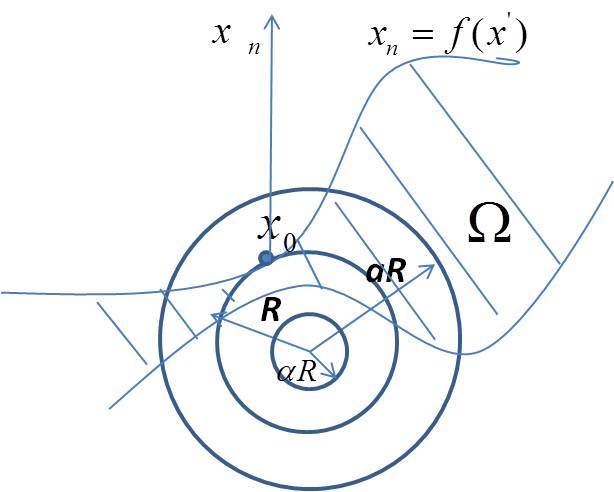}
\caption{Domain $\Omega$, boundary $\Gamma_2$ and balls $B(0,R)$, $B(0,aR)$ and $B(0,\alpha R)$.}
\label{Lipschitz}
\end{center}\end{figure}

\begin{lemma}\label{bound_layer}
Let $\Gamma_2\cap B(0,R)=\emptyset$, and $x^0\in\Gamma_2\cap\partial B(0,R)$, for some $R\le\frac\delta 2$. Assume that
$\Omega\cap B(0,\alpha R)=\emptyset$ for some $0<\alpha<\frac 12$ (see Fig. \ref{Lipschitz}).

Suppose that the vector field $\ell$ satisfies conditions in Lemma \ref{nad} uniformly on $\Gamma_2$ (that is, $\varepsilon$ 
does not depend on $x\in\Gamma_2$). 

Let function $u$ be sub-elliptic (${\cal L}u\le 0$ in $\Omega$), $u>0$ in $\Omega$, $u=0$ on $\Gamma_1$ and $\frac{\partial u}{\partial \ell}\le 0$ on $\Gamma_2$.

Then there exists $a>1$ depending on the Lipschitz constant $L$, $\varepsilon$ and ellipticity constant $e_1$ 
s.t.  
\begin{equation}\label{boundlayer}
\sup_{\Omega\cap B(0,a R)} u\ge \frac{\sup_{\Omega\cap B(0,R)} u}{1-\eta_2}.
\end{equation}
Here $\eta_2\in (0,1)$ is defined by $\alpha$ and $a$.
\end{lemma}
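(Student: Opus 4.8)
\emph{Plan.} The plan is to exhibit a \emph{barrier} $w$ in the sense of Definition~\ref{strict_growth} for the balls $B(0,R)$ and $B(0,aR)$, with $a>1$ chosen appropriately, and then to read off \eqref{boundlayer} from Lemma~\ref{krylov}, the $\eta_0$ produced by $w$ playing the role of $\eta_2$. Since ${\cal L}$, all the hypotheses and the conclusion are invariant under the dilation $x\mapsto x/R$, we may assume $R=1$; moreover if $\Omega\cap B(0,1)=\emptyset$ there is nothing to prove, so we assume $\Omega\cap B(0,1)\neq\emptyset$.

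\emph{Geometry near $x^0$.} Work in a rotated Cartesian frame (origin at $0$) in which $\Gamma_2\cap B(x^0,\delta)$ is the graph $x_n=f(x')$ with $\operatorname{Lip}(f)\le L$ and $\Omega$ lying in $\{x_n<f(x')\}$; as recalled before the statement the inner cone at boundary points near $x^0$ then has axis $-e_n$ and aperture $\varphi=\cot^{-1}(L)$, and by hypothesis $\ell$ makes, at every point of $\Gamma_2$ near $x^0$, an angle $\le\varphi-\varepsilon$ with $+e_n$. Because $\Gamma_2\cap B(0,1)=\emptyset$ while $x^0\in\Gamma_2\cap\partial B(0,1)$, the sphere $\partial B(0,1)$ supports $\Gamma_2$ at $x^0$; since $B(0,1)\subset B(x^0,2)\subset B(x^0,\delta)$ and (as $\Omega\cap B(0,1)\neq\emptyset$) $B(0,1)$ cannot lie above $\Gamma_2$, it lies in $\{x_n<f(x')\}$ near $x^0$. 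Hence the outward normal $\mu:=x^0/|x^0|$ of $B(0,1)$ at $x^0$ lies in the outward normal cone of $\{x_n\le f(x')\}$ at $x^0$, so $\mu\cdot e_n\ge\sin\varphi$; consequently $\mu$ makes an angle $\le\tfrac\pi2-\varphi$ with $+e_n$ and therefore $\mu\cdot\ell\ge\sin\varepsilon$ at $x^0$. Using $\Gamma_2\cap B(0,1)=\emptyset$ one bounds $|x-x^0|$ on $\Gamma_2\cap B(0,a)$ in terms of $L$ and $a$ only, and combining this with the Lipschitz bound and the non-tangentiality of $\ell$ one gets pointwise control of $x\cdot\ell$ (equivalently of $(x-P)\cdot\ell$ for a pole $P$ placed below $\Gamma_2$) along $\Gamma_2\cap B(0,a)$.

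\emph{The barrier.} Put $s=e_1-2$, so ${\cal L}|x-P|^{-s}\le0$ off any pole $P$ by Proposition~\ref{subsol}. For $a>1$ to be fixed set $A:=(\alpha^{-s}-a^{-s})^{-1}$ and
\[
w(x)=A\bigl(|x|^{-s}-a^{-s}\bigr)+(\text{correction}).
\]
The Landis term decreases radially from $0$; since $\Omega\cap B(0,\alpha)=\emptyset$ one has $\alpha\le|x|\le a$ on $\overline\Omega\cap\overline{B(0,a)}$, so this term is $\le A(\alpha^{-s}-a^{-s})=1$ on $\Gamma_1\cap B(0,a)$, vanishes on $\partial B(0,a)$, and is $\ge A(1-a^{-s})>0$ on $\Omega\cap B(0,1)$; it is sub-elliptic off $0$, hence in $\Omega\cap B(0,a)$. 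This gives \eqref{Lw<0}, \eqref{w<1}, \eqref{w<0} and (modulo the correction) \eqref{w>eta}. Its $\ell$-derivative on $\Gamma_2$ equals $-As|x|^{-s-2}\,x\cdot\ell$, which has the right sign only where $x\cdot\ell\ge0$, so a correction is added: a sub-elliptic function, nonpositive on $\Gamma_1\cap B(0,a)$ and on $\partial B(0,a)$, \emph{concentrated near $\Gamma_2$} (hence arbitrarily small on $B(0,1)$, which is essential for keeping $\eta_2$ positive), and whose $\ell$-derivative along $\Gamma_2\cap B(0,a)$ dominates the ``bad'' contribution $As|x|^{-s-2}|x\cdot\ell|\le As$ (using $|x|\ge1$ there). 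One then fixes $a>1$, depending only on $L,\varepsilon,e_1$ (and small enough that $B(0,aR)$ meets $\Gamma_2$ only in its Lipschitz part), so that the five requirements of Definition~\ref{strict_growth} hold simultaneously; setting $\eta_0=\eta_2:=\inf_{\Omega\cap B(0,1)}w\in(0,1)$, a quantity depending on $\alpha$ and $a$ alone, Lemma~\ref{krylov} applied to $u$ and $w$ in $\Omega\cap B(0,a)$ gives $\sup_{\Omega\cap B(0,a)}u\ge(1-\eta_2)^{-1}\sup_{\Omega\cap B(0,1)}u$; undoing the dilation yields \eqref{boundlayer}.

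\emph{Where the work is.} The entire difficulty is \eqref{w_l<0} on $\Gamma_2\cap B(0,aR)$. Unlike in the pure Dirichlet Growth Lemma (Lemma~\ref{landis}), where the radial Landis barrier requires no oblique condition, here $\ell$ is only uniformly non-tangential, not normal, so $\partial w/\partial\ell\le0$ on $\Gamma_2$ fails for the radial barrier centred at $0$ wherever $x\cdot\ell<0$; remedying this — controlling the sign and size of $x\cdot\ell$ on $\Gamma_2\cap B(0,aR)$ and designing a correction that repairs \eqref{w_l<0} yet costs essentially nothing on $B(0,R)$ — is exactly where the tangency of $\Gamma_2$ to $\partial B(0,R)$ at $x^0$, the Lipschitz bound (forcing $a$ close to $1$ so that $\Gamma_2\cap B(0,aR)$ is localised near $x^0$) and the uniform non-tangentiality of $\ell$ all come in, and keeping $\eta_2$ bounded away from $0$ throughout is the delicate balance that dictates how close to $1$ the ratio $a$ must be taken.
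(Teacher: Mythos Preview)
Your overall strategy---build a barrier in the sense of Definition~\ref{strict_growth} and invoke Lemma~\ref{krylov}---matches the paper's, and your radial term $A(|x|^{-s}-a^{-s})$ is (up to the scaling $R=1$) exactly the paper's barrier $w(x)=\alpha^sR^s|x|^{-s}-\alpha^sa^{-s}$. Where you diverge is in the handling of \eqref{w_l<0}: you assume the radial barrier fails this condition somewhere on $\Gamma_2\cap B(0,aR)$ and propose an unspecified sub-elliptic ``correction'' concentrated near $\Gamma_2$ yet negligible on $B(0,R)$. This is a gap. First, the correction is never constructed, and the properties you demand of it are in tension: since $x^0\in\Gamma_2\cap\partial B(0,R)$, the Neumann boundary touches $\overline{B(0,R)}$, so ``concentrated near $\Gamma_2$'' and ``arbitrarily small on $B(0,R)$'' cannot both hold without a much more delicate construction than what you sketch. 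Second, and more importantly, no correction is needed.

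The paper's observation---which your ``Geometry'' paragraph nearly reaches but does not exploit---is that for $a$ close enough to $1$ the pure radial barrier already satisfies $\partial w/\partial\ell\le0$ on all of $\Gamma_2\cap B(0,aR)$. The point is that $\Gamma_2\cap B(0,R)=\emptyset$ together with the Lipschitz bound forces, for $x\in\Gamma_2\cap B(0,aR)$, the explicit estimates
\[
|x'|\le\frac{R}{\sqrt{1+L^2}}\bigl(L+\sqrt{a^2-1}\bigr),\qquad x_n\ge\frac{R}{\sqrt{1+L^2}}\bigl(1-L\sqrt{a^2-1}\bigr),
\]
while the uniform non-tangentiality of $\ell$ gives $|\ell'|\le(1+L^2)^{-1/2}-\widetilde\varepsilon$ and $\ell_n\ge L(1+L^2)^{-1/2}+\widetilde\varepsilon$. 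Plugging these into $\partial w/\partial\ell=-s\alpha^sR^s|x|^{-s-2}(x_n\ell_n+x'\cdot\ell')$ and letting $a\downarrow1$ shows $x_n\ell_n+x'\cdot\ell'>0$, hence $\partial w/\partial\ell\le0$, once $a-1$ is small in terms of $L$ and $\varepsilon$ only. Then $\eta_2=\alpha^s(1-a^{-s})$ directly. So drop the correction entirely and carry your geometric estimates through to the sign of $x\cdot\ell$.
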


\begin{proof}
We take $s\ge e_1-2$ and set
\begin{equation*}\label{w} 
w(x)=\frac {\alpha^s R^s}{|x|^s}-\frac {\alpha^s}{a^s}. 
\end{equation*} 
We claim that for $a$ sufficiently close to $1$  this function satisfies all conditions in Definition \ref{strict_growth}.
Indeed:

1. From Proposition \ref{subsol}  function $w$ is sub-elliptic, condition \eqref{Lw<0} holds.

2. Evidently $w=0$ on $\partial B(0,aR)$, condition \eqref{w<0} holds,

3. while $\Omega\cap B(0,\alpha R)=\emptyset$ implies $ w\le 1$ in $\Omega\cap B(0,aR)$ (and therefore on $\Gamma_1$) condition \eqref{w<1} holds. 

Now we check condition \eqref{w_l<0}. We introduce the Cartesian coordinate system with
axes collinear with those of local coordinate system at $x^0$. We observe that the assumption $\Gamma_2\cap B(0,R)=\emptyset$ and Lipschitz condition
imply that for $x\in \Gamma_2\cap B(0,aR)$
\begin{equation*}\label{mod-x'-x_n-prp}
|x'|\le \frac R{\sqrt{1+L^2}}\,(L+\sqrt{a^2-1});\qquad x_n\ge \frac R{\sqrt{1+L^2}}\,(1-L\sqrt{a^2-1}).
\end{equation*}

Moreover, our assumption on the vector field $\ell$ means that
\begin{equation*}\label{l'-prop}
|\ell'|\le \sin(\cot^{-1}(L)-\varepsilon)\le \frac 1{\sqrt{1+L^2}}-\widetilde\varepsilon;
\end{equation*}
\begin{equation*}\label{l_n-prop}
 \ell_n\geq \cos(\cot^{-1}(L)-\varepsilon)\geq
\frac L{\sqrt{1+L^2}}+\widetilde\varepsilon
\end{equation*}
where $\widetilde\varepsilon$ depends only on $L$ and $\varepsilon$.

Therefore, the direct calculation gives
 \begin{multline*}
\frac{\partial w}{\partial \ell}(x)=-\,\frac{s\alpha^sR^s}{|x|^{s+2}}\cdot(x_n \ell_n+\ell'\cdot x')\\
\le\frac{s\alpha^sR^s}{|x|^{s+2}}\cdot\frac R{\sqrt{1+L^2}} \Big(\sqrt{a^2-1}\cdot\big(\sqrt{1+L^2}+\widetilde{\varepsilon}(L-1)\big)
-\widetilde{\varepsilon}\big(L+1\big)\Big).
 \end{multline*}   
It is easy to see that, given $\widetilde\varepsilon>0$, there is $a>1$ depending only on $\widetilde\varepsilon$ and $L$ s.t. 
$\frac{\partial w}{\partial \ell}(x)\le 0$, and \eqref{w_l<0} holds.

Finally, for $x\in \Omega\cap B(0,R)$, $w(x)\ge \alpha^s(1-a^{-s})=:\eta_2$, and \eqref{w>eta} holds. 

Thus, the claim follows, and $w$ is the barrier in the balls  $B(0,R)$, $B(0,aR)$.  From Lemma \ref{krylov} we get \eqref{boundlayer}.
\end{proof}

\section{Growth Lemma in the Spherical Layer}\label{sec:Spherical-Layer}

In this section we prove Growth Lemma in spherical layer near junction  point of interest  $\zeta = \overline\Gamma_1 \cap \overline\Gamma_2$.  
Without loss of generality we put $\zeta = 0 $. 

First we will introduce admissible class of domains in the spherical layer.

\begin{definition}\label{admissible-domain}
Fix five constants $0<q_1<q_2<q^*<q_3<q_4$. Define two spherical layers $\hat{U}_R\subset U_R$:
$$
U_R=B(0,q_4R)\setminus B(0,q_1R); \qquad \hat{U}_R=B(0,q_3 R) \setminus B(0,q_2 R).
$$

We call $\Omega$ {\bf admissible} in the layer $U_R$ if for some $\theta>0$ there is finite set of the balls (see Fig. \ref{Admissible})
\begin{equation*}\label{B_k} 
{\cal B}=\{B^k=B(\xi_k,\theta R)\}_{k=0}^{N}; \quad   B^k\subset\hat{U}_R  
\end{equation*} 
s.t. the following holds:


1. ${\bf C}_s(B^0\cap \Gamma_1)\geq \varkappa {\bf C}_s(\Gamma_1\cap \hat{U}_R)$, for some constant $\varkappa>0$.

2. $B^k\cap \Gamma_2 = \emptyset$, $k=1,..,N$, and $B(\xi_0,a\theta R)\cap \Gamma_2 = \emptyset$, where $a>1$ is defined in Lemma \ref{bound_layer}. 
\footnote{Note that boundaries of some balls $B^k$ may touch $\Gamma_2$. }


3. There is $\delta\in (0,1/2)$ s.t. every ball in $\cal B$ can be connected with $B^0$ by a subsequence of balls $B^j$ s.t. any intersection $ B^j\cap B^{j+1}\cap \Omega$ 
contains the ball $B(\xi_{j+1},\delta R)$.


4. The set $S_R=\partial B(0,q^* R)\cap \Omega$ is covered by balls in $\cal B$. 



\end{definition}

Fig. \ref{Admissible} schematically illustrate Definition \ref{admissible-domain}.

\begin{figure}
\centering
\begin{subfigure}{.5\textwidth}
  \centering
  \includegraphics[width=1\linewidth]{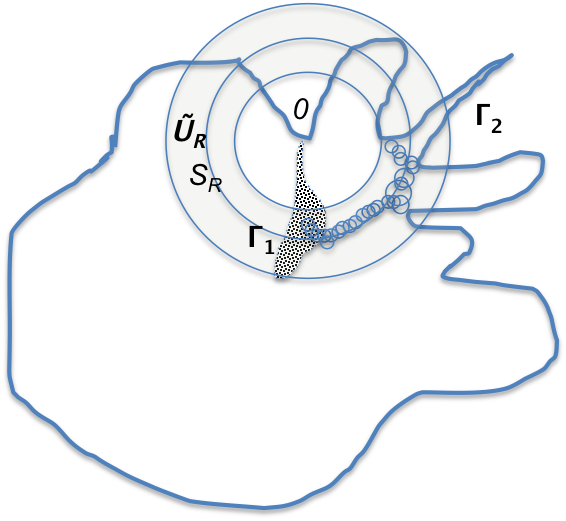}
  \label{fig:sub1}
\end{subfigure}%
\begin{subfigure}{.5\textwidth}
  \centering
 \advance\leftskip-1cm
 \includegraphics[width=2\linewidth]{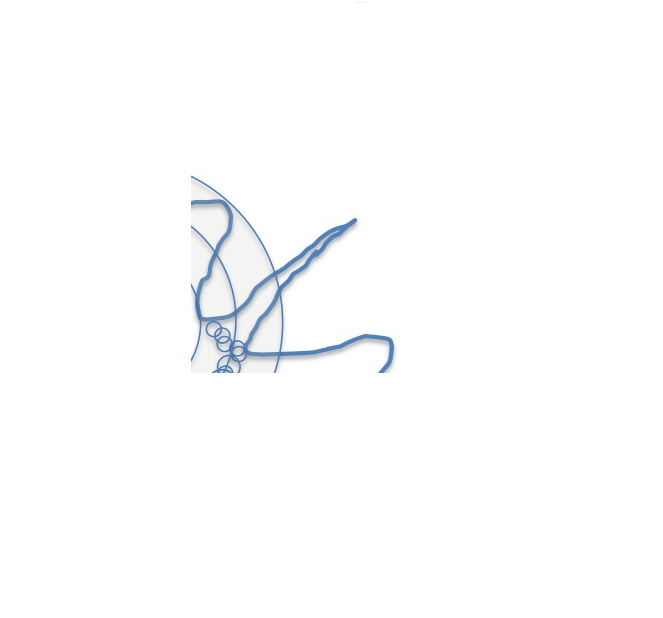}
  \label{fig:sub2}
\end{subfigure}
\vspace{-2 cm}
\caption{On the left: domain $\Omega$ admissible in Spherical Layer $U_R$.  On the right: domain and layer zoomed near boundary $\Gamma_2$ (bold line).}
\label{Admissible}

\end{figure}

\begin{lemma}\label{Lipschitz_layer}
Let function $u$ be sub-elliptic,  $u>0$ in $\Omega$. Suppose that $u\leq 0$ on $\Gamma_1$ and $\frac{\partial u}{\partial \ell}\leq 0$ on $\Gamma_2$. 
Let domain $\Omega$ be admissible in the layer $U_R$.
Then
\begin{equation*}\label{lemmainlayer}
\sup_{\Omega}u\geq \frac{\sup_{S_R} u}{1-\eta {\bf C}_s(H)R^{-s}}.
\end{equation*}
Here $H=\Gamma_1\cap \hat{U}_R$ while $\eta$ depends on $s$, the ellipticity constant $e_1$, the Lipschitz constant $L$, the vector field $\ell$, constants $\theta$, 
$\varkappa$, $\delta$ in Definition \ref{admissible-domain} and the number $N$ of balls in the set $\cal B$.
\end{lemma}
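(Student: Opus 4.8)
The plan is to chain together the two Growth Lemmas already established — Lemma \ref{landis} (capacitary growth near a Dirichlet piece of boundary) and Lemma \ref{bound_layer} (growth near a Lipschitz Neumann piece) — along the finite sequence of balls $\{B^k\}$ supplied by the admissibility condition, and to track how the supremum of $u$ propagates. First I would set $M=\sup_\Omega u$ and argue that it suffices to show
\begin{equation*}
\sup_{S_R} u \le M\,(1-\eta\,{\bf C}_s(H)R^{-s})
\end{equation*}
for a suitable $\eta$; equivalently, that $u$ cannot be too close to $M$ on $S_R$ unless the capacity ${\bf C}_s(H)$ is small. By condition 4 of Definition \ref{admissible-domain}, every point of $S_R$ lies in some ball $B^k$, so it is enough to bound $\sup_{B^k\cap\Omega} u$ for each $k$, and then to bound $\sup_{B^0\cap\Omega} u$ using the Dirichlet data captured in $B^0$.

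The core of the argument is a two-part estimate. \emph{Part one (transport along the chain):} for each $k$, condition 3 gives a subsequence of balls $B^j$ from $B^0$ to $B^k$ whose consecutive intersections with $\Omega$ contain a ball of radius $\delta R$. I would apply the constant form of Lemma \ref{landis} — or, where a ball $B^j$ abuts $\Gamma_2$, the Neumann Growth Lemma \ref{bound_layer}, which by condition 2 applies since $B^j\cap\Gamma_2=\emptyset$ and (for $B^0$) $B(\xi_0,a\theta R)\cap\Gamma_2=\emptyset$ — successively along this chain. Each application enlarges the ball by a fixed factor ($a$ or the analogous dilation) and multiplies the captured supremum by a fixed factor $>1$ depending only on $s,\delta,e_1,L,\ell$. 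Since the chain has at most $N$ steps, after finitely many applications one obtains $\sup_{B^k\cap\Omega}u\ge c_1\sup_{B^0\cap\Omega}u$ with $c_1>1$ depending only on the listed constants; reading this the other way, $\sup_{B^k\cap\Omega}u\le c_1^{-1}M$ is too strong, so instead one concludes $\sup_{B^0\cap\Omega}u \ge c_1^{-N}\sup_{B^k\cap\Omega}u$, hence $\sup_{B^0\cap\Omega}u\ge c_2\sup_{S_R}u$ with $c_2=c_2(N,\dots)\in(0,1)$... actually the cleaner bookkeeping is: a single application of the Growth Lemma between the enlarged ball and $U_R$ itself (all $B^k\subset\hat U_R\subset U_R$) gives $M=\sup_\Omega u\ge (1-\eta')^{-1}\sup_{B^0\cap\Omega}u$ only after we have first related $\sup_{B^0\cap\Omega}u$ to the Dirichlet capacity. \emph{Part two (capturing the capacity):} apply Lemma \ref{landis} to the pair of balls $B^0=B(\xi_0,\theta R)$ and $B(\xi_0,a\theta R)$ (legitimate by condition 2), with $H_0=\Gamma_1\cap B^0$: this yields
\begin{equation*}
\sup_{B(\xi_0,a\theta R)\cap\Omega}u \ge \frac{\sup_{B^0\cap\Omega}u}{1-\eta_1{\bf C}_s(H_0)(\theta R)^{-s}},
\end{equation*}
and by condition 1, ${\bf C}_s(H_0)\ge\varkappa\,{\bf C}_s(H)$. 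Combining the chain estimate, this capacitary estimate, and a final Growth Lemma application from $B(\xi_0,a\theta R)$ up to the full domain (using $1-\prod(1-t_i)\ge c\sum t_i$ type elementary inequalities to keep the capacity term additive in the denominator), and absorbing all the fixed factors into one constant $\eta$, gives the claimed inequality.

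The main obstacle I anticipate is the bookkeeping of constants through the chain, specifically making the capacitary factor ${\bf C}_s(H)R^{-s}$ survive \emph{linearly} in the final denominator rather than being degraded. A naive composition of $N$ inequalities of the form $\sup_{\text{big}}u\ge(1-t)^{-1}\sup_{\text{small}}u$ produces a denominator $\prod_j(1-t_j)$, and only the single step carrying the capacity has $t_j\sim{\bf C}_s(H_0)(\theta R)^{-s}$ small — the other steps have $t_j$ equal to fixed constants $\widetilde\eta_1,\eta_2<1$, so the product does not directly reduce to $1-\eta{\bf C}_s(H)R^{-s}$. The resolution is to run the argument in the contrapositive/ratio form: the chain of fixed-ratio Growth Lemmas shows $\sup_{B^0\cap\Omega}u\ge \kappa\sup_{S_R}u$ for a fixed $\kappa>0$ (where $\kappa=\kappa(N,\delta,a,\varepsilon,e_1,L)$), then the \emph{single} capacitary step plus one more fixed step up to $\Omega$ gives
\begin{equation*}
\sup_\Omega u \ge \frac{\kappa\,\sup_{S_R}u}{1-\eta_1\varkappa^{-1}\dots}\quad\Longrightarrow\quad \sup_{S_R}u\le \kappa^{-1}\big(1-c\,{\bf C}_s(H)R^{-s}\big)\sup_\Omega u,
\end{equation*}
and re-expanding $\kappa^{-1}(1-c\,t)\le(1-\eta t)^{-1}$ for $t$ in the relevant range — valid because $\kappa^{-1}\le 1$ is false, so one must instead absorb $\kappa$ by noting ${\bf C}_s(H)R^{-s}$ is bounded and choosing $\eta$ accordingly. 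A second, more technical point is verifying that $B(\xi_0,a\theta R)$ and all the dilated balls along the chain stay inside $U_R$ where $u$ is sub-elliptic and the boundary hypotheses hold; this is exactly what the separation $q_1<q_2<q_3<q_4$ and the smallness of $\theta,a\theta$ relative to these gaps are designed to guarantee, and I would state it as a constraint on $\theta$ at the outset of the proof.
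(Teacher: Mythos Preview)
There is a genuine gap in the transport step. The Growth Lemmas you want to chain (Lemma \ref{landis} in the ball-in-complement form, and Lemma \ref{bound_layer}) both require a piece of the boundary of the positivity set of $u$ inside the small ball: in Lemma \ref{landis} this is the ball of radius $\delta R$ contained in $B(0,R)\setminus\Omega$, and in Lemma \ref{bound_layer} it is the empty region $\Omega\cap B(0,\alpha R)=\emptyset$. But along the chain, for $j\ge1$, condition~3 of Definition \ref{admissible-domain} says that $B^j\cap B^{j+1}\cap\Omega$ contains a ball of radius $\delta R$ --- this ball lies \emph{inside} $\Omega$, where $u>0$, not in its complement. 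So there is nothing to feed into the Growth Lemma at $B^j$: the balls $B^1,\dots,B^N$ may sit entirely in the interior of $\Omega$, with no Dirichlet boundary whatsoever. Your claim that ``the chain of fixed-ratio Growth Lemmas shows $\sup_{B^0\cap\Omega}u\ge\kappa\sup_{S_R}u$'' therefore has no mechanism behind it. Moreover, even if such an inequality held with $\kappa<1$, combining it with the single capacitary step at $B^0$ yields $m\le \kappa^{-1}M(1-c\,{\bf C}_s(H)R^{-s})$, which for small capacity is weaker than the trivial $m\le M$ and cannot be rewritten as $m\le M(1-\eta\,{\bf C}_s(H)R^{-s})$; you notice this yourself (``$\kappa^{-1}\le1$ is false'') but the suggested fix via boundedness of ${\bf C}_s(H)R^{-s}$ does not rescue it.

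What the paper does instead is a level-set (subtraction) argument. One first applies Lemma \ref{landis} only at $B^0$, obtaining $M\ge \sup_{B^0\cap\Omega}u\,/\,(1-\varkappa\eta_1{\bf C}_s(H)R^{-s})$. Then a dichotomy: either $\sup_{B^0\cap\Omega}u\ge m(1-\delta_0)$ with $\delta_0$ proportional to the capacity term, and the lemma follows immediately; or else the shifted function $u_1=u-m(1-\delta_0)$ is $\le0$ on $B^0\cap\Omega$. Now the overlap ball $B(\xi_1,\delta R)\subset B^0\cap B^1\cap\Omega$ lies in the complement of $\{u_1>0\}$, and the Growth Lemma (Lemma \ref{landis} or Lemma \ref{bound_layer}, depending on whether $B(\xi_1,aR)$ meets $\Gamma_2$) applies to $u_1$ in $B^1$. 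This yields a second dichotomy, and one iterates along the chain; since $u(y)=m$ at some point of $B^k$, the process terminates in at most $N$ steps, each absorbing only a fixed factor $\tau$ into $\eta$. The subtraction is the missing idea: it converts the interior overlap balls into zero sets for the shifted functions, which is what makes the chain usable.
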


\begin{proof}
Without loss of generality we set $\theta=1$. Let $\sup_{S_R} u=:m=u(y)$, here $y\in \overline S_R$. By assumption 4 in Definition \ref{admissible-domain},
$y\in {\overline B}\vphantom{B}^k$ for some $k$. By assumption 3, we can choose a subsequence $B^j$ connecting $B^0$ and $B^k$.

Consider the ball $B_0$ and the ball  $B(\xi_0,aR)$, $a>1$, concentric to it. Due to assumptions  1 and 2 in Definition \ref{admissible-domain}, 
we can apply  Lemma \ref{landis} to get:  
\begin{equation*}\label{B1}
M:=\sup_{\Omega} u \geq \sup_{\Omega\cap B(\xi_0,a R)}u \geq \frac{\sup_{B^0\cap \Omega} u}{1-\varkappa\eta_1 {\bf C}_s(H)R^{-s}}. 
\end{equation*}  

Suppose that
\begin{equation} \label{delta0}
 \sup_{B^0\cap \Omega} u \geq m(1-\delta_0), \qquad \text{where} \quad \delta_0 = \frac{\varkappa\eta_1 {\bf C}_s(H)R^{-s}}{2(1-\varkappa\eta_1 {\bf C}_s(H)R^{-s})}. 
\end{equation}
Then after some calculation we get 
\begin{equation*}\label{B11}
M \geq  \frac{m}{1-\eta_3 {\bf C}_s(H)R^{-s}}
\end{equation*} 
for some $\eta_3$ depending on $\varkappa\eta_1$, and the statement follows. 

If \eqref{delta0} does not hold, we consider the function 
\begin{equation}\label{func_v1}
u_1(x)=u(x)-m(1-\delta_0),
\end{equation}
then $u_1(x)\leq 0$ in $B^0 \cap \Omega$. 

By assumption 3,  $B^0 \cap B^1 \cap \Omega$ contains a ball of radius $\delta R$. 
Let $\Omega_1:=\{ x: u_1 (x)>0\}$.  Assume that $B^1\cap \Omega_1\not= \emptyset$, otherwise we consider the first ball in the subsequence $B^j$ for which 
this property holds.

Suppose that 
\begin{equation}\label{tau}
 \sup_{B^1\cap \Omega} u_1 \geq m\delta_0(1-\tau), 
\end{equation}
here the constant $\tau$ will be chosen later.

Consider any simply connected component of the domain $B(\xi_1,aR)\cap \Omega_1$ in which the supremum in (\ref{tau}) is realised.
There are two possibilities:

a) $B(\xi_1,aR)\cap \Gamma_2 = \emptyset$; 

b) $B(\xi_1,aR)\cap \Gamma_2 \ne \emptyset$ 

\noindent (recall that $a=a(L,\ell, e_1)>1$ is defined in Lemma 3.1).

Let us start with case (a). Due to assumption 3, Lemma \ref{landis} and (\ref{tau}) it follows that
\begin{equation}\label{func_v11}
\sup_{B(\xi_1,aR)\cap \Omega} u_1\ge  \frac {\sup_{B^1\cap \Omega} u_1}{1-\widetilde\eta_1}\ge \frac{m\delta_0(1-\tau)}{1-\widetilde\eta_1}.
\end{equation}

Using  (\ref{func_v1}) and \eqref{func_v11} we deduce
\begin{equation*}\label{u-tau1}
 \sup_{B(\xi_1,aR)\cap \Omega}u \geq m\big(1+\frac {\delta_0(\widetilde\eta_1-\tau)}{1-\widetilde\eta_1}\big).  
\end{equation*}
Letting $\tau=\frac{\widetilde\eta_1}2$ we get 
\begin{equation}\label{u-eta4}
M \geq  \sup_{B(\xi_1,aR)\cap \Omega}u \geq m\big(1+\frac {\delta_0\tau}{1-2\tau}\big),  
\end{equation}
and the statement follows.

In case of (b) we proceed with the same arguments but instead of Lemma  \ref{landis} we apply Lemma \ref{bound_layer} and put $\tau=\frac{\eta_2}2$.
Thus, if (\ref{tau}) holds with $\tau=\frac 12\min\{\widetilde\eta_1,\eta_2\}$ then (\ref{u-eta4}) is satisfied in any case, and Lemma is proved.

If \eqref{tau} does not hold then function $u$ satisfies
\begin{equation*}\label{u-tau}
 \sup_{B^1\cap \Omega}u \leq m(1-\delta_0\tau). 
\end{equation*}

As in previous step we consider the function
\begin{equation*}\label{func_v2}
u_2(x)=u(x)- m(1-\delta_0\tau),
\end{equation*}
$u_2(x)\leq 0$ in $B^1 \cap \Omega$.

Repeating previous argument we deduce that if
\begin{equation} \label{v-2}
\sup_{B^2\cap \Omega} u_2 \ge m\delta_0 \tau(1 -\tau)
\end{equation}
then
$$
M \geq m\big(1+\frac {\delta_0\tau^2}{1-2\tau}\big),  
$$
and Lemma is proved.

If \eqref{v-2} does not hold, then 
$$
 \sup_{B^2\cap \Omega}u \leq m(1-\delta_0\tau^2). 
$$
Repeating this process we either prove Lemma or arrive at the inequality
$$
 \sup_{B^k\cap \Omega}u \leq m(1-\delta_0\tau^k) 
$$
that is impossible since $y\in {\overline B}\vphantom{B}^k$ and $u(y)=m$.
\end{proof}

\section{Dichotomy of solutions}\label{sec:Dichotomy}

In this section we will apply obtained Growth Lemma in spherical layer to prove dichotomy of solutions near point $\zeta$  of the junction of Dirichlet and Neumann 
boundaries. As in previous section we put $\zeta=0$.

Let $\Omega \subset \{x :x_n <f(x')\}$ and $\Gamma_2$ is a graph of the function $x_n=f(x')$, $f(0)=0$.  Set $R_m=Q^{-m}$ for some $Q>1$, $S_m=\partial B(0, q^*R_m)$, and
$$
U_m=B(0, q_4 R_m) \setminus B(0, q_1 R_m), \quad \hat{U}_m=B(0,q_3R_m)\setminus B(0, q_2 R_m).
$$

We fix $N_0\in\mathbb N$ and $q_1<q_2<q^*<q_3<q_4$ s.t. $q^*<q_1Q$. Suppose that for all $m\ge N_0$ the domain $\Omega$ with boundaries $\Gamma_1$ and $\Gamma_2$ is 
admissible in the layer $U_m$ in the sense of Definition \ref{admissible-domain} with $R=R_m$, and all constants in Definition \ref{admissible-domain} do not depend on $m$.


\begin{lemma}\label{Dich_main}
Let function $u$ be sub-elliptic,  $u>0$ in $\Omega$. Suppose that $u\leq 0$ on $\Gamma_1\cap B(0,q_4R_{N_0})$ and $\frac{\partial u}{\partial \ell}\leq 0$ 
on $\Gamma_2\cap B(0,q_4R_{N_0})$. Let domain $\Omega$ be admissible in the layers $U_m$, $m\ge {N_0}$.

Let $M_m=\sup_{S_m\cap\Omega} u$. Then one of two statements holds: 
\medskip

either $M_{N_1+1}\ge M_{N_1}$ for some $N_1$, and for all $m> N_1$
\begin{equation}\label{m+1>m}
M_{m+1}\geq \frac{M_m}{1-\eta {\bf C}_s(H_m)Q^{sm}},
\end{equation}

or for all $m>{N_0}$
\begin{equation}\label{m>m+1}
M_m\geq \frac{M_{m+1}}{1-\eta {\bf C}_s(H_m)Q^{sm}}.
\end{equation}
Here $H_m=\Gamma_1\cap \hat{U}_m$, and $\eta$ is the constant from Lemma \ref{Lipschitz_layer}.
\end{lemma}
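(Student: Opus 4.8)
The plan is to apply the Spherical Layer Growth Lemma (Lemma~\ref{Lipschitz_layer}) in each layer $U_m$, $m \ge N_0$, and then set up a dichotomy based on the first index (if any) at which the sequence $M_m$ stops decreasing. First I would record what Lemma~\ref{Lipschitz_layer}, applied with $R = R_m = Q^{-m}$, actually yields: since $S_{R_m} = \partial B(0,q^*R_m)\cap\Omega$ is exactly $S_m\cap\Omega$ (up to the natural identification), and since $R_m^{-s} = Q^{sm}$, the lemma gives
\begin{equation}\label{eq:layer-raw}
\sup_\Omega u \ge \frac{M_m}{1-\eta\,{\bf C}_s(H_m)Q^{sm}}, \qquad H_m = \Gamma_1\cap\hat U_m.
\end{equation}
However, the statement of Lemma~\ref{Dich_main} compares $M_m$ with $M_{m\pm1}$, not with $\sup_\Omega u$, so the real content is to localize \eqref{eq:layer-raw}. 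The point is that the hypotheses on $u$ (sub-elliptic, positive, vanishing on $\Gamma_1$, oblique derivative $\le 0$ on $\Gamma_2$) are only assumed on $B(0,q_4R_{N_0})$, and one wants to run the growth lemma in a way that only ``sees'' a bounded chunk of the domain near $0$; the quantity playing the role of $\sup_\Omega u$ should be replaced by $\max\{M_m, M_{m+1}\}$ or $\max\{M_{m-1},M_m\}$ depending on the case.

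The key step is the following observation, which I would make precise using the condition $q^* < q_1 Q$: the inner sphere $\partial B(0,q_1R_m)$ of the layer $U_m$ has radius $q_1 R_m = q_1 Q^{-m}$, which is strictly larger than $q^* R_{m+1} = q^* Q^{-(m+1)}$, so the sphere $S_{m+1}$ sits \emph{strictly inside} the inner boundary of $U_m$; symmetrically $S_m$ lies inside $U_m$ while $S_{m-1}$ lies outside the outer boundary $\partial B(0,q_4R_m)$ (here one also wants $q_4 < q^* Q$, which follows from the chosen ordering once $Q$ is taken large enough, or can simply be added to the list of constraints on the $q_i$). Thus the layer $U_m$ separates the ``outer'' part of $\Omega$ (where $u$ can attain $M_{m-1}$) from the ``inner'' part (where $u$ attains $M_{m+1}$). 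Applying the maximum principle together with the boundary conditions — concretely, comparing $u$ against a suitable constant on $\Omega\cap\big(B(0,q_4R_m)\setminus B(0,q_1R_m)\big)$ and invoking Lemma~\ref{comparison_theorem} / Lemma~\ref{nad} to rule out an interior-type maximum on the relevant portion of $\partial\Omega$ — one sees that $\sup$ of $u$ over the closed ball $\overline{B(0,q^*R_m)}\cap\Omega$ equals $\max_{j\ge m} M_j$ up to the behaviour at $0$, and likewise over the exterior. The upshot is the localized growth estimate
\begin{equation}\label{eq:layer-local}
\max\{M_{m-1},\,M_m,\,M_{m+1}\} \ge \frac{M_m}{1-\eta\,{\bf C}_s(H_m)Q^{sm}},
\end{equation}
where in fact the left side can be replaced by $\max\{M_{m-1},M_{m+1}\}$ since the fraction on the right strictly exceeds $M_m$.

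With \eqref{eq:layer-local} in hand the dichotomy is a clean bookkeeping argument. Either $M_{m+1} < M_m$ for every $m > N_0$ — in which case $\max\{M_{m-1},M_{m+1}\} = M_{m-1}$, and \eqref{eq:layer-local} reads $M_{m-1} \ge M_m/(1-\eta{\bf C}_s(H_m)Q^{sm})$, which after reindexing $m\mapsto m+1$ is exactly \eqref{m>m+1} — or there is a least index $N_1 > N_0$ with $M_{N_1+1}\ge M_{N_1}$. In the latter case I would argue by induction that the monotone regime persists: assuming $M_{m+1}\ge M_m$, \eqref{eq:layer-local} with index $m+1$ gives $\max\{M_m,M_{m+2}\}\ge M_{m+1}/(1-\eta{\bf C}_s(H_{m+1})Q^{s(m+1)}) > M_{m+1}\ge M_m$, forcing $M_{m+2}\ge M_{m+1}$ and, in fact, $M_{m+2}\ge M_{m+1}/(1-\eta{\bf C}_s(H_{m+1})Q^{s(m+1)})$, which is \eqref{m+1>m} with $m$ replaced by $m+1$. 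Running this from $m = N_1$ downward-stable gives \eqref{m+1>m} for all $m > N_1$.

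The main obstacle I anticipate is making \eqref{eq:layer-local} rigorous — i.e. justifying that the supremum of $u$ over $\Omega$ that appears in Lemma~\ref{Lipschitz_layer} can legitimately be replaced by the supremum over the bounded region trapped between two consecutive spheres, despite $u$ only satisfying the boundary conditions near $0$ and despite the possibility of $u$ blowing up or oscillating as $x\to 0$. This requires a careful separation-of-domain argument using $q^* < q_1 Q$ and the comparison principle, and is where the geometric hypotheses linking the $q_i$ and $Q$ are really used; everything after that is elementary manipulation of the recursive inequalities.
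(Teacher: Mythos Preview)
Your approach is correct and uses the same two ingredients as the paper---the comparison principle (Lemma~\ref{comparison_theorem}) and the layer Growth Lemma (Lemma~\ref{Lipschitz_layer})---but you package them differently. The paper's proof is shorter: it first invokes the comparison principle to deduce \emph{global} monotonicity of the continuous function $M(\rho)=\sup_{\partial B(0,\rho)\cap\Omega}u$ (either $M(\rho)$ is increasing as $\rho\to 0$ past some threshold, or it is decreasing throughout---this is exactly Remark~\ref{mon}), and then a single application of Lemma~\ref{Lipschitz_layer} in each layer, combined with that monotonicity, immediately identifies which spherical boundary of $U_m$ carries the supremum and yields \eqref{m+1>m} or \eqref{m>m+1}. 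You instead derive the discrete three-term inequality $\max\{M_{m-1},M_{m+1}\}\ge M_m/(1-\eta{\bf C}_s(H_m)Q^{sm})$ and bootstrap the monotone regime by induction; this is a perfectly valid alternative and has the virtue of making the needed geometric constraint $q_4<q^*Q$ explicit (the paper states only $q^*<q_1Q$, but your observation that the companion inequality is also required for the localization is correct). One small slip: in case~(b) your reindexing $m\mapsto m+1$ produces $M_m\ge M_{m+1}/(1-\eta{\bf C}_s(H_{m+1})Q^{s(m+1)})$, with $H_{m+1}$ rather than $H_m$ in the denominator, so it is not literally \eqref{m>m+1} as written; this off-by-one is harmless for the downstream Theorem (a sum over all $m$) and is arguably an imprecision in the paper's own indexing as well.
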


\begin{proof}
Due to Lemma \ref{comparison_theorem}, there are two possibilities:
 
(a) if $M_{N_1+1}\ge M_{N_1}$for some $N_1>{N_0}$ then $M(\rho)=\sup_{\partial B(0, \rho)\cap \Omega} u>M_m$, $m>N_1$ for any $\rho < q^* R_m$;

(b)  otherwise $M_m>M_{m+1}$ for all $m>{N_0}$. 

Now Lemma \ref{Lipschitz_layer} gives (\ref{m+1>m}) in the case (a) and (\ref{m>m+1}) in the case (b).
\end{proof}

\begin{remark}\label{mon}
Let function $u$ be sub-elliptic,  $u>0$ in $\Omega$. Suppose that $u\leq 0$ on $\Gamma_1\cap B(0,\rho_0)$ and $\frac{\partial u}{\partial \ell}\leq 0$ 
on $\Gamma_2\cap B(0,\rho_0)$. Then the maximum principle implies the following dichotomy (we recall that $M(\rho)=\sup_{\partial B(0, \rho)\cap \Omega} u$):\medskip

either there is $\rho^*\le\rho_0$ s.t.   for $\rho_2<\rho_1<\rho^*$ we have $M(\rho_2)>M(\rho_1)$;

or $M(\rho_2)<M(\rho_1)$ for all $\rho_2<\rho_1<\rho_0$.
\end{remark}

Applying recursively alternative in Lemma \ref{Dich_main} and using Remark \ref{mon} we get asymptotic dichotomy.

\begin{theorem}
 Let the assumptions of Lemma \ref{Dich_main} be satisfied.
Suppose that $\sum_{m=0}^{\infty} {\bf C}_s(H_m)Q^{sm}=\infty$, where $H_m=\Gamma_1\cap \hat{U}_m$.

Then one of two statements holds: 
\medskip

either $M(\rho)\to\infty$ as $\rho \to 0$, and
\begin{equation*}
\liminf_{\rho\to\infty} M(\rho)\exp\Big(-\widehat\eta\sum_{m=0}^{[c\ln \rho ]} {\bf C}_s(H_m)Q^{sm}\Big)>0 ,
\end{equation*}

or $M(\rho)\to0$ as $\rho \to 0$, and
\begin{equation*}
\limsup_{\rho\to\infty} M(\rho)\exp\Big(\widehat\eta\sum_{m=0}^{[c\ln \rho ]} {\bf C}_s(H_m)Q^{sm}\Big)=0 ,
\end{equation*}

Here $\widehat\eta$ and $c$ depend on the same quantities as $\eta$ in Lemma \ref{Lipschitz_layer}.
\end{theorem}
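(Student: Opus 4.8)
The plan is to run the dichotomy of Lemma \ref{Dich_main} to its limit and convert the multiplicative recursions into the claimed exponential bounds via the elementary inequality $\log\frac{1}{1-t}\ge t$ for $t\in[0,1)$.

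First I would dispose of the trivial branch. By Remark \ref{mon}, either $M(\rho)$ is eventually strictly decreasing as $\rho\to 0$, or $M(\rho)$ is eventually strictly increasing. In the first case, along the sequence $R_m$ we have $M_m=M(q^*R_m)$ eventually decreasing, so Lemma \ref{Dich_main} must be in branch (b): \eqref{m>m+1} holds for all $m>N_0$. In the second case $M_m$ is eventually increasing, forcing branch (a): \eqref{m+1>m} holds for all $m>N_1$. So exactly one of the two recursions is in force from some index on, and the two cases of the theorem correspond to these two branches. I would also note that $M(\rho)$, being monotone near $0$, has a limit, finite or $+\infty$.

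Next, the quantitative step. Take the increasing branch first. Iterating \eqref{m+1>m} from $m=N_1$ to $m=k-1$ gives
\begin{equation*}
M_k \ge M_{N_1}\prod_{m=N_1}^{k-1}\frac{1}{1-\eta\,{\bf C}_s(H_m)Q^{sm}},
\end{equation*}
so, taking logarithms and using $\log\frac1{1-t}\ge t$,
\begin{equation*}
\log M_k \ge \log M_{N_1} + \eta\sum_{m=N_1}^{k-1}{\bf C}_s(H_m)Q^{sm}.
\end{equation*}
Since $\sum_m {\bf C}_s(H_m)Q^{sm}=\infty$ this forces $M_k\to\infty$, hence $M(\rho)\to\infty$. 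To pass from the sequence $M_k$ to $M(\rho)$ for continuous $\rho$, I would use that $q^*<q_1Q$ and monotonicity of $M$ to sandwich $M(\rho)$ between $M_{k}$ and $M_{k+1}$ when $q^*R_{k+1}\le \rho\le q^*R_k$, i.e.\ $k\sim c\log\rho$ with $c=1/\log Q$ (up to an additive constant absorbed into $\widehat\eta$); combining the lower bound for $M_k$ with the absolute constant $\log M_{N_1}$ and the finitely many terms $m<N_1$ (also absorbed) yields
\begin{equation*}
\log M(\rho)\ge -C+\widehat\eta\sum_{m=0}^{[c\log\rho]}{\bf C}_s(H_m)Q^{sm},
\end{equation*}
which rearranges to $\liminf_{\rho\to 0}M(\rho)\exp\big(-\widehat\eta\sum_{m=0}^{[c\log\rho]}{\bf C}_s(H_m)Q^{sm}\big)\ge e^{-C}>0$. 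The decreasing branch is symmetric: iterating \eqref{m>m+1} downward gives $\log M_{N_0}\ge \log M_k+\eta\sum_{m=N_0}^{k-1}{\bf C}_s(H_m)Q^{sm}$, so $\log M_k\le \log M_{N_0}-\eta\sum_{N_0}^{k-1}(\cdots)\to-\infty$, whence $M(\rho)\to 0$ and the same sandwiching produces the $\limsup$ statement with the $+\widehat\eta$ exponent.

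The main obstacle is bookkeeping rather than a genuine mathematical difficulty: one must be careful that the divergent series drives the bound even though the useful recursion only starts at $N_1$ (resp.\ $N_0$), so the finitely many initial capacity terms and the initial value $M_{N_1}$ must be folded into the constants $\widehat\eta$ and $C$ without destroying positivity; and the translation between the discrete index $m$ and the continuous radius $\rho$ must correctly produce the cutoff $[c\log\rho]$ with $c$ depending only on $Q$ (hence on the admissibility data). One should also double-check that the $\liminf$/$\limsup$ are the right one-sided limits (the paper writes $\rho\to\infty$ in the displays, evidently a typo for $\rho\to 0$, matching "as $\rho\to 0$" in the prose), and that in the increasing branch the hypotheses of Lemma \ref{Lipschitz_layer} are genuinely available at every scale $R_m$, which is exactly what the standing admissibility assumption for all $m\ge N_0$ guarantees.
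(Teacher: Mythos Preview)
Your proposal is correct and is precisely the approach the paper intends: the paper's own proof is the single sentence ``Applying recursively alternative in Lemma \ref{Dich_main} and using Remark \ref{mon} we get asymptotic dichotomy,'' and your argument is a faithful and careful expansion of that sentence---matching the monotonicity branches of Remark \ref{mon} to the two recursions of Lemma \ref{Dich_main}, iterating, and converting the product to an exponential via $\log\frac1{1-t}\ge t$. Your observations about the typos ($\rho\to\infty$ versus $\rho\to0$, and the sign in $[c\ln\rho]$) are on point, and the one detail you leave implicit---that to get $\limsup=0$ rather than merely bounded in the decreasing branch one takes $\widehat\eta$ strictly smaller than $\eta$---is a routine adjustment fully consistent with the statement that $\widehat\eta$ depends on the same data as $\eta$.
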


\bibliographystyle{plain}

\end{document}